\newtheorem{defi}{Definition}[section]
\newtheorem{teo}{Theorem}[section]
\newtheorem{pro}[teo]{Proposition}
\newcommand{\nid}{\noindent }
\begin{document}

	\title{A crossinggram for random fields on lattices}
	
	\author{Helena Ferreira}
	\affil{Universidade da Beira Interior, Centro de Matem\'{a}tica e Aplica\c{c}\~oes (CMA-UBI), Avenida Marqu\^es d'Avila e Bolama, 6200-001 Covilh\~a, Portugal, \texttt{helenaf@ubi.pt}}

	\author{Marta Ferreira}
	\affil{Center of Mathematics of Minho University\\ Center for Computational and Stochastic Mathematics of University of Lisbon \\
		Center of Statistics and Applications of University of Lisbon, Portugal, \texttt{msferreira@math.uminho.pt} }

	\author{Lu\'is A. Alexandre}
	\affil{Universidade da Beira Interior\\NOVA LINCS\\6200-001 Covilh\~a, Portugal, \texttt{luis.alexandre@ubi.pt}}
	
	\date{}
	
	\maketitle
	
\abstract{
The modeling of risk situations that occur in a space-time framework can be done using max-stable
random fields on lattices.
Although the summary coefficients for  the spatial and temporal behaviour do not characterize the
finite-dimensional distributions of the random field, they have the advantage of being immediate to
interpret and easier to estimate.
The coefficients that we propose give us information about the tendency of a random field
for local oscillations of its values in relation to real valued high levels. It is not the magnitude of the oscillations that is being evaluated, but rather the greater or lesser number of oscillations, that is, the tendency of the trajectories to oscillate.
We can observe surface
trajectories more smooth over a region according to higher crossinggram value.
It takes value in $[0,1]$ and increases with
the concordance of the variables of the random field.
}\\

\graphicspath{{./Figuras/}}

\nid\textbf{keywords:} {extreme values, upcrossings, tail dependence coefficients, extremal coefficients}

\nid\textbf{AMS 2000 Subject Classification}: 60G70
%\nid\textbf{AMS 2000 Subject Classification}  Primary: 60G70; Secondary: 62G32\\

\section{Introduction}\label{sintro}
The modeling of risk situations that occur in a space-time framework can be done using max-stable random fields on lattices.
Consider that $Y_i(x)$ represents the daily maximum precipitation in year $i$ at a location $x$ belonging to some locations family $A\subset \mathbb{Z}^2$. The stochastic behavior of $\{Y_n(x),\, x\in A,\, n\geq 1\}$ can not be studied using the classical theory of stable distributions because the variables of interest are not sums, thus excluding any modeling with normal multivariate distributions (Embrechts \emph{et al.}~\cite{Embrechts1997} 1997). If we are interested in assessing probabilities of risk events, such as  ``the maximum, in a region A, of the maximum daily rainfall  over $n$ years exceeds $u$", $P(\bigvee_{x \in A}\bigvee_{i=1}^{n}Y_i(x)>u)$, with notation $a \vee b=\max(a,b)$, we have to use a theory that provides information about the distributions of variables $\bigvee_{i=1}^{n}Y_i(x)$, $x\in A$, and the dependency structure between them, i.e., the theory of multivariate extreme values distributions (Ribatet \emph{et al.}~\cite{Ribatet2016} 2016). In the context of this theory, it is considered that, as $n\to \infty$, the set of approximate distributions for $\bigvee_{i=1}^{n}Y_i(x)$ admits only Fr\'echet, Weibull and Gumbel laws and that the approximate dependence function $D_{x_1,...,x_d}$ for vector $\left(\bigvee_{i=1}^{n}Y_i(x_1),...,\bigvee_{i=1}^{n}Y_i(x_d)\right)$, whatever the choice
of locations $(x_1,...,x_d)$, is max-stable, i.e., satisfies the condition 
\begin{eqnarray}\nonumber
D_{x_1,...,x_d}^k(u_1,...,u_d)=D_{x_1,...,x_d}(u_1^k,...,u_d^k),\,\forall k>0,\,u_i\in [0,1],
\end{eqnarray}
(de Haan and Ferreira \cite{Haan2006} 2006). \\
This will be the main context of this work: we consider random fields $\{X(x),\, x\in \mathbb{Z}^2\}$, for which $(X(x_1),...,X(x_d))$ has multivariate extreme values distribution, regardless the choice of the locations vector $(x_1,...,x_d)$. Its distribution function is completely characterized by the marginal laws and by its exponent function. A widely used choice for marginal distributions is the unit Fr\'echet, for sake of simplicity and without loss of generality. The exponent function 
\begin{eqnarray}\nonumber
\ell_{x_1,...,x_d}(z_1,...,z_d)=-\log D_{x_1,...,x_d}(P(X(x_1)\leq z_1),...,P(X(x_d)\leq z_d)),\,\forall z_i\geq 0,
\end{eqnarray}
verifies
\begin{eqnarray}\label{ell}%\nonumber
\ell_{x_1,...,x_d}(z_1,...,z_d)=\frac{E\left(\bigvee_{i=1}^{d}U(x_i)^{z_i}\right)}
{1-E\left(\bigvee_{i=1}^{d}U(x_i)^{z_i}\right)},\,\forall z_i\geq 0,
\end{eqnarray}
where $(U(x_1),...,U(x_d))$ is a vector of standard uniform distributed marginals having the same dependence function $D_{x_1,...,x_d}$.\\
The estimation of the exponent function presents challenges (see, e.g., Beirlant \emph{et al.}~\cite{Beirlant2004} 2004, Ferreira and Ferreira \cite{Ferreira2012a} 2012a, Beirlant \emph{et al.}~\cite{Beirlant2016} 2016, Escobar \emph{et al.}~\cite{Escobar2018} 2018, Kiriliouk \emph{et al.}~\cite{Kiriliouk2018} 2018 and references therein) and several summary measures of the dependence between the variables of a max-stable random field can be used: extremal coefficients (Tiago de Oliveira \cite{Tiago1962} 1962/1963, Smith \cite{Smith1990}  1990), coefficients of tail dependence (Sibuya \cite{Sibuya1960} 1960, Joe \cite{Joe1997} 1997, Li \cite{Li2009} 2009), coefficients of pre-asymptotic tail dependence (Ledford and Tawn \cite{Ledford1997} 1997, Wadsworth and Tawn \cite{Wadsworth2012}-\cite{Wadsworth2013} 2012-2013), fragility coefficients (Falk and Tichy \cite{F+T} 2011, Ferreira and Ferreira \cite{Ferreira2012b} 2012b), madogram (Naveau \emph{et al.}~\cite{Naveau2009} 2009, Ferreira and Ferreira \cite{Ferreira2018} 2018), extremogram (Davis and Mikosch \cite{D+M2009} 2009), among others.\\
Although the summary coefficients of the spatial dependence structure do not characterize the finite-dimensional distributions of $\{X(x),\,x\in \mathbb{Z}^2\}$, they have the advantage of being immediate to interpret and easier to estimate. The coefficients that we propose, study and apply here give us information about the tendency of $\{X(x),\,x\in A\}$, $A\subset \mathbb{Z}^2$, for local oscillations of their values in relation to real high levels $u$. We can observe  trajectories $\{x,X(x)\}_{x\in A}$ more or less smooth (or more or less rough) according to the coefficients values.\\ 
The  tendency for the variables, in close locations, to jointly present extreme values will determine  the proportion of exceedances of  high levels that are upcrossings of the level. As the joint tendency for extreme values is usually summarized in the literature by upper-tail dependence coefficients, the question arises:
how to use these coefficients to summarize the degree of this kind of smoothness for a random field on a lattice?
We invite the reader to follow us in a motivated and justified construction of a response to this question.\\
The objective of this work is to quantify the propensity of a max-stable random field for oscillations over regions $A\subset \mathbb{Z}^2$,  through coefficients that are easy to estimate and use in applications. Thus, in the next sections, we will introduce the crossinggram $\zeta(A)$, $A\subset \mathbb{Z}^2$ , we will deduce some of their properties and propose a method for its estimation. We will illustrate the calculation of $\zeta(A)$ in a model for max-stable random fields. Section \ref{snonmaxstab} is concerned with oscillations of general random fields, for which we propose a smaller  coefficient easier to deal with, but less interesting for the max-stable context. 

\section{Notations and construction of the crossinggram}\label{scoef}

Let $\{X(x),\, x\in \mathbb{Z}^2\}$ be a max-stable random field, i.e., the variables $X(x)$ have extreme-type distribution and, for any choice of locations $x_1,\,...,\, x_d$, the vector $(X(x_1),...,X(x_d))$ has multivariate extreme values distribution. Without loss of generality for applications, suppose that $X(x)$ has common distribution function (d.f.) unit Fr\'echet, i.e., $F(x)=\exp(-x^{-1})$, $x>0$. \\
For each location $x=\left(x^{(1)},x^{(2)}\right)\in \mathbb{Z}^2$, let $V_d(x):=\left\{y\in  \mathbb{Z}^2:\, \left\Arrowvert y-x\right\Arrowvert\leq d\right\}$. For the particular case of $d = 1$ that we will highlight, we simply write $V(x)$. \\
We say that $\{X(x),\, x\in \mathbb{Z}^2\}$ has an oscillation with respect to $u$, $u\in(0,1)$, at location $x$, when the following event occurs
\begin{eqnarray}\nonumber
\left\{F(X(x))\leq u < \bigvee_{y\in V(x)} F(X(y))\right\}\,,
\end{eqnarray}
and it has an exceedance of $u$, at location $x$, when $\left\{F(X(x))> u \right\}$ occurs.\\
Several tail dependence coefficients for bivariate and multivariate distributions have been constructed in the literature and, in our view, the work of Li (\cite{Li2009}, 2009) is an important landmark. For our purpose, we take as a good starting point the upper-tail dependence of $\bigvee_{y\in V(x)}F(X(y))$ and $\bigvee_{y\in A}F(X(y))$, for each location $x\in A \subset \mathbb{Z}^2$, where the region $A$ is bounded and its finite cardinal will be denoted $|A|$.\\ 
Consider, for some location $x\in A$,
\begin{eqnarray}\nonumber
\lambda(V(x)|A)=\displaystyle \lim_{u\uparrow 1}P\left(\bigvee_{y\in V(x)}F(X(y))>u\bigg |\bigvee_{y\in A}F(X(y))>u\right)
\end{eqnarray}
and
\begin{eqnarray}\nonumber
\lambda(x|A)=\displaystyle \lim_{u\uparrow 1}P\left(F(X(x))>u\bigg|\bigvee_{y\in A}F(X(y))>u\right)\,.
\end{eqnarray}
We intuitively expect smaller values for the difference
\begin{eqnarray}\nonumber
\sum_{x \in A}\lambda(V(x)|A)-\sum_{x \in A}\lambda(x|A)
\end{eqnarray}
in regions where, for each $x$, the variables $F(X(y))$, $y\in V(x)$, have lower tendency for oscillations relative to high levels. \\
The following proposition justifies this interpretation for the values of these differences, presenting them as coefficients that summarize the expected number of local oscillations in  the spatial  context.\\
\begin{pro}
For $x\in A \subset \mathbb{Z}^2$, the tail dependence coefficients $\lambda(V(x)|A)$ and $\lambda(x|A)$ satisfy
$$\displaystyle\sum_{x \in A}\lambda(V(x)|A)-\sum_{x \in A}\lambda(x|A)=\displaystyle \lim_{u\uparrow 1}E\left(\sum_{x \in A}\mathbf{1}_{\{F(X(x))\leq u<{\bigvee}_{y\in V(x)}F(X(y))\}}\bigg|\sum_{x \in A}\mathbf{1}_{\{F(X(x))>u\}}>0\right).$$
\end{pro}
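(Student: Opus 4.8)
The plan is to exploit the fact that both sides of the identity share the same conditioning event, and then to decompose the neighbourhood exceedance into two disjoint pieces. First I would observe that the conditioning event on the right-hand side, $\{\sum_{x\in A}\mathbf{1}_{\{F(X(x))>u\}}>0\}$, is literally the same event as $\{\bigvee_{y\in A}F(X(y))>u\}$ appearing in the definitions of $\lambda(V(x)|A)$ and $\lambda(x|A)$: having at least one exceedance of $u$ among the sites of $A$ is equivalent to the maximum of $F(X(y))$ over $y\in A$ exceeding $u$. Denote this common event by $B_u$.

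Next I would use linearity of conditional expectation, valid because $A$ is finite, to rewrite the right-hand side inside the limit as $\sum_{x\in A}P\!\left(F(X(x))\leq u<\bigvee_{y\in V(x)}F(X(y))\,\big|\,B_u\right)$. The crucial step is a disjoint decomposition. Since $x\in V(x)$, we have $\bigvee_{y\in V(x)}F(X(y))\geq F(X(x))$, so the event $\{\bigvee_{y\in V(x)}F(X(y))>u\}$ is the disjoint union of $\{F(X(x))>u\}$ and the oscillation event $\{F(X(x))\leq u<\bigvee_{y\in V(x)}F(X(y))\}$. Taking probabilities conditional on $B_u$ and rearranging yields, for each $x$,
$$P\!\left(F(X(x))\leq u<\bigvee_{y\in V(x)}F(X(y))\,\Big|\,B_u\right)=P\!\left(\bigvee_{y\in V(x)}F(X(y))>u\,\Big|\,B_u\right)-P\!\left(F(X(x))>u\,\big|\,B_u\right).$$

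Finally I would sum this identity over the finitely many $x\in A$ and let $u\uparrow 1$. Because the sum is finite, the limit passes through it term by term, and each of the two limits on the right is precisely one of the defining coefficients $\lambda(V(x)|A)$ and $\lambda(x|A)$; this reconstitutes $\sum_{x\in A}\lambda(V(x)|A)-\sum_{x\in A}\lambda(x|A)$, the left-hand side of the proposition.

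I do not expect a serious obstacle, since the argument is essentially bookkeeping rather than hard analysis. The only points needing care are confirming that $x\in V(x)$, so that the decomposition is genuinely a disjoint union; checking that the two conditioning events coincide exactly; and justifying the interchange of the limit with the finite sum, which is immediate from the algebra of limits once one knows that each individual limit exists, as guaranteed by the very definitions of the $\lambda$ coefficients. No tail-dependence or max-stability machinery beyond those definitions is required for the identity itself.
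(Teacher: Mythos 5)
Your proposal is correct and follows essentially the same route as the paper: both rewrite the conditional expectation as a sum of conditional probabilities over the common conditioning event $\{\bigvee_{y\in A}F(X(y))>u\}$ and then use the disjoint decomposition of $\{\bigvee_{y\in V(x)}F(X(y))>u\}$ into the exceedance at $x$ and the oscillation event. Your version merely makes explicit the bookkeeping (finiteness of $A$, existence of the individual limits) that the paper leaves implicit.
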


%\vspace{1cm}
\begin{proof}
Observe that 
\begin{eqnarray}\nonumber
\begin{array}{rl}
&\displaystyle\sum_{x \in A}\lambda(V(x)|A)-\sum_{x \in A}\lambda(x|A)
= \displaystyle \lim_{u\uparrow 1}\frac{\displaystyle\sum_{x \in A}P\left(\displaystyle\mathop{\bigvee}_{y\in V(x)}F(X(y))>u\right)-\sum_{x \in A}P(F(X(x))>u)}{P\left(\displaystyle\mathop{\bigvee}_{x\in A}F(X(x))>u\right)}\\\\
=& \displaystyle \lim_{u\uparrow 1}\frac{\displaystyle\sum_{x \in A}P\left(F(X(x))\leq u < \displaystyle\mathop{\bigvee}_{y\in V(x)}F(X(y))\right)}{P\left(\displaystyle\mathop{\bigvee}_{x\in A}F(X(x))>u\right)}\,.
\end{array}
\end{eqnarray}
\end{proof}
We depart from the above representation and, with a convenient normalization in order to eliminate the effect of the dimension $|A|$, we propose coefficients $\zeta(A)$, $A\subset \mathbb{Z}^2$, with values in $[0, 1]$. For the normalization here we take into account that $P	\left(F(X(x))\leq u<\bigvee_{y\in V(x)}F(X(y))\right)\leq \sum_{y\in V(x)}P(F(X(y))>u)$, which will lead to a usefull representation of the coefficient in Proposition \ref{pSAepsilon}.\\
The following crossinggram increases when the local oscillations decrease and, if we consider "roughness" the proximity between the number of upcrossings and exceedances of high levels we can say that crossinggram increases with the local "smoothness" of the random field.\\
\begin{defi}\label{dSASnm}
The crossinggram $\zeta(A)$, $A\subset \mathbb{Z}^2$, for the max-stable random field $\{X(x),\,x \in \mathbb{Z}^2\}$ is defined by
\begin{eqnarray}\nonumber\label{SAlimE}
\zeta(A)=1-\displaystyle\lim_{u\uparrow 1}\frac{\displaystyle E\left(\sum_{x \in A}\mathbf{1}_{\{F(X(x))\leq u<{\bigvee}_{y\in V(x)}U(y)\}}\bigg|\sum_{x \in A}\mathbf{1}_{\{F(X(x))>u\}}>0\right)}{\displaystyle E\left(\displaystyle\sum_{x \in A}\mathop{\sum}_{y\in V(x)-\{x\}}\mathbf{1}_{\{F(X(y))>u\}}\bigg|\sum_{x \in A}\mathbf{1}_{\{F(X(x))>u\}}>0\right)}\,.
\end{eqnarray}
\end{defi}
As will be pointed in the next section, the limits $\zeta(A)$, $A\subset \mathbb{Z}^2$, exist and have enlightening properties. 

\section{Properties of the crossinggram}\label{sproperties}

 The coefficients of tail dependence can be related to the extremal coefficients (see, e.g., Beirlant \emph{et al.}~\cite{Beirlant2004} 2014). We remind that, for any $x_1,...,x_d\in \mathbb{Z}^2$, we have
\begin{eqnarray}\label{ce}%\nonumber
P\left(\bigcap_{i=1}^{d}\{F(X(x_i))\leq u\}\right)=u^{\theta(x_1,...,x_d)},
\end{eqnarray}
with $\theta(x_1,...,x_d)$ constant in $[1,d]$ and $\theta(x_1,...,x_d)=\ell_{x_1,...,x_d}(1,...,1)$. In the case of $\theta(x_i,\,i\in I)$ we simply write $\theta(I)$.\\
In the particular case of $d=2$ and isotropic stationary max-stable random fields, we can consider the extremal function 
\begin{eqnarray}\nonumber
\underline{\theta}(h)\equiv \theta(x,x+h)=E(X(x)\vee X(x+h)),
\end{eqnarray}
since the dependence between $X(x)$ and $X(y)$ will only depend on the distance between $x$ and $y$. For some models of continuos max-stable random fields found in literature (Smith, Schlather, Brown-Resnick, Extremal-t), an expression for  $\underline{\theta}(h)$ is available. \\
The coefficients of tail dependence and the extremal coefficients can be considered dual when we study their variation with the concordance of the variables: when concordance increases, the bivariate upper-tail dependence rises and the extremal coefficients fall. The proposed $\zeta(A)$  coefficients increase with increasing local concordance of the random field variables, as can easily be seen if we express them from the extremal coefficients. Before we establish the properties that justify the utility and interpretation of the proposed crossinggram, we first present a representation for it through the extremal coefficients, which will also motivate their estimation.\\
Observe that 
\begin{eqnarray}\nonumber
\lambda(V(x)|A)=\displaystyle\lim_{u\uparrow 1}\frac{P\left(\bigvee_{y\in V(x)}U(y)>u\right)}{P\left(\bigvee_{y\in A}U(y)>u\right)}=\displaystyle\lim_{u\uparrow 1}\frac{1-u^{\theta(V(x))}}{1-u^{\theta(A)}}=\frac{\theta(V(x))}{\theta(A)}.
\end{eqnarray}
Simple calculations allow then to obtain the following representation for the crossinggram, from the proposition 2.1.\\

\begin{pro}\label{pSAepsilon}
The crossinggram $\zeta(A)$, $A\subset \mathbb{Z}^2$, for the max-stable random field $\{X(x),\,x \in \mathbb{Z}^2\}$ satisfies 
\begin{eqnarray}\label{SA}%\nonumber
\zeta(A)=\frac{\mathcal{V}(A)-\sum_{x \in A}\theta(V(x))}{\mathcal{V}(A)-|A|},
\end{eqnarray}	
where $\mathcal{V}(A)=\sum_{x \in A}|V(x)|$.
\end{pro}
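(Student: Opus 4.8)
The plan is to compute the limit in Definition \ref{dSASnm} by handling its numerator and denominator separately, each normalised by $P\bigl(\bigvee_{x\in A}F(X(x))>u\bigr)=1-u^{\theta(A)}$, and then to observe that this common normalising factor cancels in the ratio. First I would note that the conditioning event $\{\sum_{x\in A}\mathbf 1_{\{F(X(x))>u\}}>0\}$ is precisely $\{\bigvee_{x\in A}F(X(x))>u\}$, and that $U(y)=F(X(y))$ is standard uniform, so the indicator in the numerator is exactly the oscillation indicator $\mathbf 1_{\{F(X(x))\leq u<\bigvee_{y\in V(x)}F(X(y))\}}$ treated in Proposition 2.1.

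For the numerator, Proposition 2.1 already identifies the limit of the conditional expectation with $\sum_{x\in A}\lambda(V(x)|A)-\sum_{x\in A}\lambda(x|A)$. The representation $\lambda(V(x)|A)=\theta(V(x))/\theta(A)$ is recorded just before the statement, and the corresponding computation for a single location gives $\lambda(x|A)=\lim_{u\uparrow1}\frac{1-u}{1-u^{\theta(A)}}=1/\theta(A)$, using $\theta(\{x\})=1$. Summing over $x\in A$ then yields the numerator limit $\bigl(\sum_{x\in A}\theta(V(x))-|A|\bigr)/\theta(A)$.

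For the denominator, I would invoke linearity of expectation: the unconditioned expectation equals $\sum_{x\in A}\sum_{y\in V(x)\setminus\{x\}}P(F(X(y))>u)=(1-u)(\mathcal V(A)-|A|)$, since each inner sum has $|V(x)|-1$ terms and each marginal exceedance has probability $1-u$. Dividing by $1-u^{\theta(A)}$ and letting $u\uparrow1$ gives the denominator limit $(\mathcal V(A)-|A|)/\theta(A)$.

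Finally I would form the ratio; the factor $\theta(A)$ cancels, leaving $\bigl(\sum_{x\in A}\theta(V(x))-|A|\bigr)/(\mathcal V(A)-|A|)$, and subtracting this from $1$ produces the asserted expression $(\mathcal V(A)-\sum_{x\in A}\theta(V(x)))/(\mathcal V(A)-|A|)$. The only genuine subtlety is the passage from the limit of a ratio to the ratio of the two normalised limits; this is justified once both limits are shown to exist and the denominator limit is nonzero, which holds provided $\mathcal V(A)>|A|$, i.e.\ at least one neighbourhood $V(x)$ contains a point other than $x$ — the mild nondegeneracy already needed for the coefficient to be well defined.
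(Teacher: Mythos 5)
Your argument is correct and follows essentially the same route the paper intends: it invokes Proposition 2.1 for the numerator, converts the tail dependence coefficients via $\lambda(V(x)|A)=\theta(V(x))/\theta(A)$ and $\lambda(x|A)=1/\theta(A)$, and evaluates the denominator by linearity using $P(F(X(y))>u)=1-u$ and $\lim_{u\uparrow1}(1-u)/(1-u^{\theta(A)})=1/\theta(A)$. The paper leaves these as ``simple calculations,'' so your write-up simply makes explicit the steps it omits, including the harmless cancellation of $\theta(A)$ and the nondegeneracy condition $\mathcal{V}(A)>|A|$.
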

\vspace{0,5cm}

If $\{X(x),\,x\in \mathbb{Z}^2\}$ is isotropic, stationary and all $V(x)$, $x\in A$, have the same shape, then
\begin{eqnarray}\nonumber
\zeta(A)=\frac{\mathcal{V}(A)-|A|\theta(V(x_0))}{\mathcal{V}(A)-|A|},
\end{eqnarray}	
for some $x_0 \in A$. \\
Several extremal coefficients type-functionals can be defined as indicators of dependence of the variables over A but this is not de purpose of the proposition 3.1. For instance, $\gamma(A)=\frac{\theta(A)-1}{|A|-1}$ captures the overall dependence of the variables $X_i$, $i\in A$, but does not incorporate the discrepancy between local dependences neither the propensity for local upcrossings. The expression (\ref{SA}) is a usefull representation for the crossinggram. The insight to $\zeta(A)$  is not available from this representation rather from its definition.\\
We will apply the above proposition to derive the next properties and propose an estimator, beyond the moment estimator suggested from the definition.\\

\begin{pro}
	The crossinggram $\zeta(A)$, $A\subset \mathbb{Z}^2$, for the max-stable random field $\{X(x),\,x \in \mathbb{Z}^2\}$ satisfies 
	\begin{itemize}
		\item[(i)]  
		$\zeta(A)\in [0,1]$.
		\item[(ii)] $\zeta(A)=0$ if and only if the variables of $\{X(x),\,x\in A\}$ are independent;
		\item[(iii)]$\zeta(A)=1$ if and only if the variables of $\{X(x),\,x\in A\}$ are totally dependent;
		\item[(iv)] $\zeta(A)$ increases with the concordance between the variables of $\{X(x),\,x\in A\}$.
	\end{itemize}
\end{pro}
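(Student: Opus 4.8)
The plan is to run the entire argument through the closed-form representation of Proposition~\ref{pSAepsilon}. Writing $\mathcal{V}(A)=\sum_{x\in A}|V(x)|$, I would first rewrite
\[
\zeta(A)=\frac{\sum_{x\in A}\bigl(|V(x)|-\theta(V(x))\bigr)}{\sum_{x\in A}\bigl(|V(x)|-1\bigr)},
\]
so that numerator and denominator both become sums of per-site contributions. The only external facts I need are the universal bounds $1\le\theta(V(x))\le|V(x)|$ for every $x$ (since $\theta(V(x))=\ell_{V(x)}(1,\dots,1)$ is an extremal coefficient of $|V(x)|$ variables), together with the two boundary characterizations: $\theta(I)=|I|$ exactly when $\{X(y):y\in I\}$ are independent, and $\theta(I)=1$ exactly when they are totally dependent. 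Both are read off directly from (\ref{ce}): for fixed $u\in(0,1)$, $u^{\theta(I)}=P\bigl(\bigcap_{y\in I}\{F(X(y))\le u\}\bigr)$, which equals $\prod_{y\in I}u=u^{|I|}$ under independence and equals $u=u^{1}$ under total dependence, with the converses obtained by reading the identity for all $u$.

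For (i): each summand satisfies $0\le|V(x)|-\theta(V(x))\le|V(x)|-1$ because $1\le\theta(V(x))\le|V(x)|$, and the denominator is strictly positive since every $V(x)$ contains $x$ together with its lattice neighbours, so $|V(x)|\ge 2$. Summing the termwise bounds gives $0\le\zeta(A)\le 1$. Since each numerator summand is nonnegative, $\zeta(A)=0$ forces $\theta(V(x))=|V(x)|$ for every $x\in A$, and $\zeta(A)=1$ forces each summand to attain its maximum, i.e. $\theta(V(x))=1$ for every $x\in A$; the converses are immediate. Applying the two boundary characterizations then gives (ii) and (iii).

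The main obstacle is the passage from these per-neighbourhood statements to the global statements about $\{X(x):x\in A\}$. For (iii) this is clean: $\theta(V(x))=1$ means $X(x)=X(y)$ a.s. for all $y\in V(x)$, so equalities chain along any lattice path, and assuming $A$ is a connected region all the $X(x)$, $x\in A$, coincide a.s. For (ii) the subtlety is that local independence does not propagate ($X_1\perp X_2$ and $X_2\perp X_3$ need not give $X_1\perp X_3$); here I would invoke the max-stable fact that pairwise independence of the components of a multivariate extreme value vector implies full independence, so it suffices that every pair in $A$ be independent. I would then note that $\zeta(A)=0$ directly yields independence of each site from its neighbourhood, and that the equivalence with global independence of $\{X(x):x\in A\}$ is to be read under the standing assumption that $A$ is connected and covered by the neighbourhoods $V(x)$.

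Finally, for (iv) I would argue monotonicity again through (\ref{ce}). Fix $u\in(0,1)$; then $\theta(V(x))=\log_u P\bigl(\bigcap_{y\in V(x)}\{F(X(y))\le u\}\bigr)$. Increasing the concordance of the field raises every joint lower-orthant probability $P\bigl(\bigcap_{y\in V(x)}\{F(X(y))\le u\}\bigr)$, and since $u\in(0,1)$ the map $t\mapsto\log_u t$ is decreasing, so each $\theta(V(x))$ decreases. Consequently every numerator summand $|V(x)|-\theta(V(x))$ increases while the denominator $\mathcal{V}(A)-|A|$ is unchanged, whence $\zeta(A)$ increases with concordance. This reduces (iv) to the standard monotonicity of the extremal coefficient in the concordance order, which is the one place I would pin down the precise order (e.g. the lower-orthant/supermodular order on the max-stable laws) to make the claim rigorous.
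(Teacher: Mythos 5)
Your proof is correct and follows the same backbone as the paper's: everything is routed through the representation of Proposition~\ref{pSAepsilon} and the bounds $1\le\theta(V(x))\le|V(x)|$, which immediately give (i) and reduce (ii) and (iii) to the boundary cases $\theta(V(x))=|V(x)|$ and $\theta(V(x))=1$. Two differences are worth noting. First, for (iv) the paper takes a detour through expectations of maxima: it uses the concordance (orthant) order to get $E\bigl(\bigvee_{y\in V(x)}F(Y(y))\bigr)\le E\bigl(\bigvee_{y\in V(x)}F(X(y))\bigr)$ via Shaked and Shanthikumar, and then converts this to $\theta^{(Y)}(V(x))\le\theta^{(X)}(V(x))$ through (\ref{ell}) and (\ref{ce}); your argument reads the same conclusion directly off (\ref{ce}) from the monotonicity of lower-orthant probabilities and the fact that $t\mapsto\log_u t$ is decreasing for $u\in(0,1)$. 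Both hinge on exactly the same ordering of the laws, so this is a shortcut rather than a new idea, and your closing remark that the precise order must be pinned down applies equally to the paper's version. Second, and more substantively, you flag the local-to-global passage in (ii) and (iii) that the paper's proof silently elides: $\zeta(A)=0$ (resp.\ $=1$) only yields independence (resp.\ total dependence) within each neighbourhood $V(x)$, $x\in A$, and upgrading this to a statement about all of $\{X(x),\,x\in A\}$ genuinely requires extra input --- chaining of a.s.\ equalities along lattice paths for (iii), and the max-stable ``pairwise independence implies joint independence'' fact plus some connectivity/covering hypothesis for (ii), since sites of $A$ at distance greater than $2$ never share a neighbourhood and are therefore unconstrained by the hypothesis $\theta(V(x))=|V(x)|$ for all $x$. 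Your treatment of this point is more careful than the paper's, which asserts the equivalences without comment; as stated, the ``only if'' directions of (ii) and (iii) really do need the caveats you supply.
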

\begin{proof}
(i) The statement results from $1\leq \theta(V(x))\leq |V(x)|$, $\forall x\in A$.\\
(ii) $\zeta(A)=0\Leftrightarrow \theta(V(x))=|V(x)|$, $\forall x\in A$, which occurs if and only if, variables $X(x)$, $x\in A$, are independent. \\
(iii) $\zeta(A)=1\Leftrightarrow \theta(V(x))=1$, $\forall x\in A$, which occurs if and only if, variables $X(x)$, $x\in A$, are  totally dependent. \\
(iv) Suppose that the variables of $\{Y(x),\,x\in A\}$ are more concordant than those of $\{X(x),\,x\in A\}$. This means that, for any $z(x)\in [0,1]$, with $x\in A$,
\begin{eqnarray}\nonumber
P\left(\bigcap_{x\in A}\{F(Y(x))>z(x)\}\right)\geq P\left(\bigcap_{x\in A}\{F(X(x))>z(x)\}\right)
\end{eqnarray}
and
\begin{eqnarray}\nonumber
P\left(\bigcap_{x\in A}\{F(Y(x))\leq z(x)\}\right)\geq P\left(\bigcap_{x\in A}\{F(X(x))\leq z(x)\}\right).
\end{eqnarray}
Then (Shaked and Shanthikumar \cite{Shaked2007} 2007) we have
\begin{eqnarray}\nonumber
E\left(\bigvee_{y\in V(x)}F(Y(y))\right)\leq E\left(\bigvee_{y\in V(x)}F(X(y))\right)
\end{eqnarray}
and
\begin{eqnarray}\nonumber
\frac{E\left(\bigvee_{y\in V(x)}F(Y(y))\right)}{1-E\left(\bigvee_{y\in V(x)}F(Y(y))\right)}\leq \frac{E\left(\bigvee_{y\in V(x)}F(X(y))\right)}{1-E\left(\bigvee_{y\in V(x)}F(X(y))\right)},
\end{eqnarray}
that is, by (\ref{ell}) and (\ref{ce}), $\theta^{(Y)}(V(x))\leq \theta^{(X)}(V(x))$, $\forall x\in A$. Thus, from the previous proposition it results $S^{(Y)}(A)\geq S^{(X)}(A)$, where the upper indexes distinguish the fields to which the coefficients refer. 
\end{proof}

\section{Estimation of $\zeta(A)$}\label{sestimation}

We recall that the extremal coefficient corresponds to the exponent function at the unit vector and thus, considering (\ref{ell}), we have
\begin{eqnarray}\label{epsilonellestim}%\nonumber
\theta(x_1,...,x_d)=\ell_{x_1,...,x_d}(1,...,1)
=\frac{E\left(\bigvee_{i=1}^{d}U(x_i)\right)}
{1-E\left(\bigvee_{i=1}^{d}U(x_i)\right)}=\frac{1}
{1-E\left(\bigvee_{i=1}^{d}U(x_i)\right)}-1,
\end{eqnarray}
where $U(x_i)=F(X(x_i))$, $i=1,\dots,d$. Ferreira and Ferreira (\cite{Ferreira2012a} 2012a) presented an estimator for $\theta(x_1,...,x_d)$ by taking the sample mean in place of the expected value. Strong consistency and asymptotic normality were also addressed in Ferreira and Ferreira (\cite{Ferreira2012a} 2012a). 
Here we follow the same methodology. More precisely, consider $\left\{X^{(j)}(x),\,x\in A\right\}$, $j=1,...,n$, a random sample coming from  $\left\{X(x),\,x\in A\right\}$. % and% the scaled ranks 
Based on (\ref{SA}) and (\ref{epsilonellestim}), we state
\begin{eqnarray}\label{estimSA}%\nonumber
\hat{S}(A)=\frac{\mathcal{V}(A)-\sum_{x \in A}\hat{\theta}(V(x))}{\mathcal{V}(A)-|A|},
\end{eqnarray}
where
\begin{eqnarray}\nonumber
\hat{\theta}(V(x))%=\frac{\frac{1}{n}\sum_{j=1}^{n}\bigvee_{x\in V(A)}\hat{U}_j(x)}{1-\frac{1}{n}\sum_{j=1}^{n}\bigvee_{y\in V(x)}\hat{U}_j(y)}
=\frac{1}{1-\frac{1}{n}\sum_{j=1}^{n}\bigvee_{y\in V(x)}\hat{U}_j(y)}-1,
\end{eqnarray}
with
\begin{eqnarray}\nonumber
\hat{U}_j(y)=\frac{1}{n+1}\sum_{l=1}^{n}\mathbf{1}_{\{X^{(l)}(y)\leq X^{(j)}(y)\}}.
\end{eqnarray}

\section{The crossinggram outside the max-stable context}\label{snonmaxstab}
For a random field $\{X(x),\,x\in \mathbb{Z}^2\}$, not necessarily max-stable, we can define $\zeta(A)$ as previously
\begin{eqnarray}\nonumber%\label{SAlimE}
\zeta(A)=1-\displaystyle\lim_{u\uparrow 1}\frac{\displaystyle E\left(\sum_{x \in A}\mathbf{1}_{\{U(x)\leq u<{\bigvee}_{y\in V(x)}U(y)\}}\bigg|\sum_{x \in A}\mathbf{1}_{\{U(x)>u\}}>0\right)}{\displaystyle E\left(\displaystyle\sum_{x \in A}\mathop{\sum}_{y\in V(x)-\{x\}}\mathbf{1}_{\{U(y)>u\}}\bigg|\sum_{x \in A}\mathbf{1}_{\{U(x)>u\}}>0\right)}\,,
\end{eqnarray}
with $U(x)=F_{X(x)}(X(x))$, $x\in \mathbb{Z}^2$, provided the limit exists. We can consider different marginals and the relationship with the tail dependence coefficients remains valid. 
However, we don't have the relation between the tail dependence coefficients $\lambda$ and the extremal coefficients $\theta$, and therefore Proposition \ref{pSAepsilon} is not valid. 
Consequently, the estimation method proposed for $\zeta(A)$ can not be used. 
The estimation of the coefficients could be done through the moment estimation for the expectations in its definition, or estimation methods for tail dependence coefficients, already mentioned.\\
Since bivariate tail dependence coefficients can be more easily computed and estimated than multivariate tail dependence coefficients, we propose now a smaller measure $\zeta^*(A)\leq \zeta(A)$  dependent only on bivariate marginal distributions. Its main drawback is to not take into account joint exceedances of $u$ in $V(x)$ and, for isotropic and stationary random fields, it reduces to bivariate $\lambda$. Its advantages over $\zeta(A)$ are the availability of several models for bivariate tail dependence in the literature and a simpler estimation.

\begin{defi}
The crossinggram $\zeta^*(A)$, $A\subset \mathbb{Z}^2$, for the  random field $\{X(x),\,x \in \mathbb{Z}^2\}$ is defined by
\begin{eqnarray}\nonumber%\label{SAlimE}
\zeta^*(A)=1-\displaystyle\lim_{u\uparrow 1}\frac{\displaystyle E\left(\sum_{x \in A}\sum_{y\in V(x)-\{x\}}\mathbf{1}_{\{U(x)\leq u<U(y)\}}\bigg|\sum_{x \in A}\mathbf{1}_{\{U(x)>u\}}>0\right)}{\displaystyle E\left(\displaystyle\sum_{x \in A}\mathop{\sum}_{y\in V(x)-\{x\}}\mathbf{1}_{\{U(y)>u\}}\bigg|\sum_{x \in A}\mathbf{1}_{\{U(x)>u\}}>0\right)}\,,
\end{eqnarray}
provided the limit exists. 
\end{defi}

\section{Example}\label{sexamples}

Let $\{Y(x),\, x\in \mathbb{Z}^2\}$ be a  random field with independent variables and independent of the random variable $R$. Suppose that $F_{Y(x)}(z)=F_R(z)=\exp(-z^{-1})$, $z>0$, and that $\{\beta_1,\dots,\beta_k\}$ is a family of constants in $(0,1]$. \\
For a fixed partition $\mathcal{P}=\{A_1,\dots,A_k\}$ of $\mathbb{Z}^2$, we define 
\begin{eqnarray}\nonumber%\label{SAlimE}
X(x)=Y(x)\beta(x)\vee R(1-\beta(x)),\,x\in \mathbb{Z}^2,\, n\geq 1,
\end{eqnarray}
with $\beta(x)=\sum_{i=1}^{k}\beta_i\mathds{1}_{A_i}(x)$, $x\in \mathbb{Z}^2$.\\
We have $F_{X(x)}(z)=P(X(x)\leq z)=e^{-z^{-1}}$ 
and, for any choice of locations $x_1,\dots,x_d$,
\begin{eqnarray}\nonumber%\label{SAlimE}
F_{(X(x_1),\dots, X(x_d))}(z_1,\dots,z_d)=P\left(\bigcap_{j=1}^{d} \{X(x_j)\leq z_j\}\right)=\exp\left(-\sum_{j=1}^{d}z_j^{-1}\beta(x_j)-\bigvee_{j=1}^{d}z_j^{-1}(1-\beta(x_j))\right).
\end{eqnarray}
The dependence function of $(X(x_1),\dots,X(x_d))$ is given by
\begin{eqnarray}\label{exDepFun}
\begin{array}{rl}
\displaystyle D_{x_1,\dots,x_d}(u_1,\dots,u_d)=&\displaystyle\sum_{(i_1,\dots,i_d)\in \{1,\dots,k\}^d}\mathds{1}_{A_{i_1}\times\dots\times A_{i_d}}(x_1,\dots,x_d)\cdot \prod_{j=1}^{d}u_j^{\beta_{i_j}}\bigwedge_{j=1}^{d} u_j^{(1-\beta_{i_j})}\\\\
=&\displaystyle\prod_{j=1}^{d}u_j^{\beta(x_j)}\bigwedge_{j=1}^{d} u_j^{(1-\beta(x_j))},
\end{array}
\end{eqnarray}
which is max-stable. We remark that, if locations $x_1,\dots,x_d$ belong to the same region $A_s$, we have
\begin{eqnarray}\nonumber%\label{exDepFun}
\begin{array}{rl}
\displaystyle D_{x_1,\dots,x_d}(u_1,\dots,u_d)=\displaystyle\prod_{j=1}^{d}u_j^{\beta_s}\bigwedge_{j=1}^{d} u_j^{(1-\beta_s)}=\left(\prod_{j=1}^{d}u_j\right)^{\beta_s}\left(\bigwedge_{j=1}^{d} u_j\right)^{(1-\beta_s)},
\end{array}
\end{eqnarray}
which is a geometric mean of the product copula and the minimum copula. \\
In general, if $x_1\in A_{i_1},\dots,x_d\in A_{i_d}$, we have
\begin{eqnarray}\nonumber%\label{exDepFun}
\begin{array}{rl}
\displaystyle D_{x_1,\dots,x_d}(u_1,\dots,u_d)=\displaystyle D_{\prod}\left(u_1^{\beta_{i_1}},\dots,u_d^{\beta_{i_d}}\right)
D_{\bigwedge}\left(u_1^{1-\beta_{i_1}},\dots,u_d^{1-\beta_{i_d}}\right),
\end{array}
\end{eqnarray}
where $D_{\prod}$ and $D_{\bigwedge}$ respectively denote the copulas of vectors with independent and totally dependent marginals.\\
From (\ref{exDepFun}) we obtain, for $V(x)=\{x_1,\dots,x_d\}$,
\begin{eqnarray}\nonumber%\label{exDepFun}
\begin{array}{rl}
\displaystyle \theta(V(x))=\sum_{y\in V(x)}\beta(y)+\bigvee_{y\in V(x)}(1-\beta(y)).
\end{array}
\end{eqnarray}
In particular,
\begin{eqnarray}\nonumber%\label{exDepFun}
\displaystyle \theta(x_1,x_2)=
\left\{
\begin{array}{ll}
1+\beta_s & ,\,\textrm{if }x_1, x_2\in A_s\\
1+\beta_s\vee \beta_{s'} & ,\,\textrm{if }x_1\in A_s, x_2\in A_{s'}.\\
\end{array}
\right.
\end{eqnarray}
The expression of $\theta(x_1,x_2)$ suggests an estimation method for the model constants $\beta_i$, $i=1,\dots,k$. \\
For each $i$, if we choose two locations $x_1$ and $x_2$ in $A_i$, we have
\begin{eqnarray}\nonumber%\label{exDepFun}
\begin{array}{rl}
\displaystyle \widehat{\beta}_i=\widehat{\theta}(x_1,x_2)-1,
\end{array}
\end{eqnarray}
where $\widehat{\theta}(x_1,x_2)$ can be obtained as we proposed in Section \ref{sestimation}.\\
From the expression 
\begin{eqnarray}\nonumber%\label{exDepFun}
\begin{array}{rl}
\displaystyle \theta(V(x))=1+\sum_{y\in V(x)}\beta(y)-\bigwedge_{y\in V(x)}\beta(y),
\end{array}
\end{eqnarray}
we can also conclude that, in this model, $\theta(V(x))\in]1,|V(x)|]$, thus excluding total dependence.\\
For $A\subset \mathbb{Z}^2$, by applying the proposition 3.1, we obtain
\begin{eqnarray}\nonumber%\label{exDepFun}
\begin{array}{rl}
\displaystyle \zeta(A)=&\displaystyle\frac{\nu(A)-\sum_{x \in A}\left(\sum_{y\in V(x)}\beta(y)+\bigvee_{y\in V(x)}(1-\beta(y))\right)}{\nu(A)-|A|}\\\\
=&1-\displaystyle\frac{\sum_{x \in A}\sum_{y\in V(x)}\beta(y)-\sum_{x \in A}\bigwedge_{y\in V(x)}\beta(y)}{\nu(A)-|A|}.
\end{array}
\end{eqnarray}
Consider, to easily illustrate this crossinggram, regions $A\subset A_i$ and such that $V(x)\subset A_i$ for each $x\in A$. Then, for these regions, we get $\theta(V(x))=1+(|V(x)|-1)\beta_i$ and $\zeta(A)=1-\beta_i$.
Therefore, the lower the beta the greather the dependence between variables on A ( smaller $\theta$) and the greather the crossinggram value
, which indicates a smaller proportion of upcrossings among exceedances of high levels.
This result agrees with what we would expect from the definition of the random field since lower $\beta$ value potentiates the leveling effect of the factor $R$.\\
We simulate this random field, for $\mathcal{P}= \cup_{i=1}^3 A_i$, with $A_i=\{(x,y) \in \mathbb{Z}^2: d_{i-1}^2 \le x^2 + y^2 < d_i^2\}$, $i=1,2,3$, $d_0=0,$ $d_1=12,$ $d_2=34,$ $d_3=+\infty$, $\beta_1=8/10$,  $\beta_2=6/10$, $\beta_3=1/10$, over $C=\{(x,y) \in \mathbb{Z}^2:  x^2 + y^2 < 50^2\}$.
The generated trajectory can be seen in Fig.~\ref{fig:1}. Despite the plots can't provide a quantitative information for the intensity of local upcrossings we can see over $A_1\cap C$ a more rugged trajectory, corresponding to a $\zeta(A_1\cap C)=\frac{2}{10}$, and a smoother trajectory over $A_3\cap C$ corresponding to $\zeta(A_3\cap C)=\frac{9}{10}$.

\begin{figure}
 \centering
 \includegraphics[width=17cm]{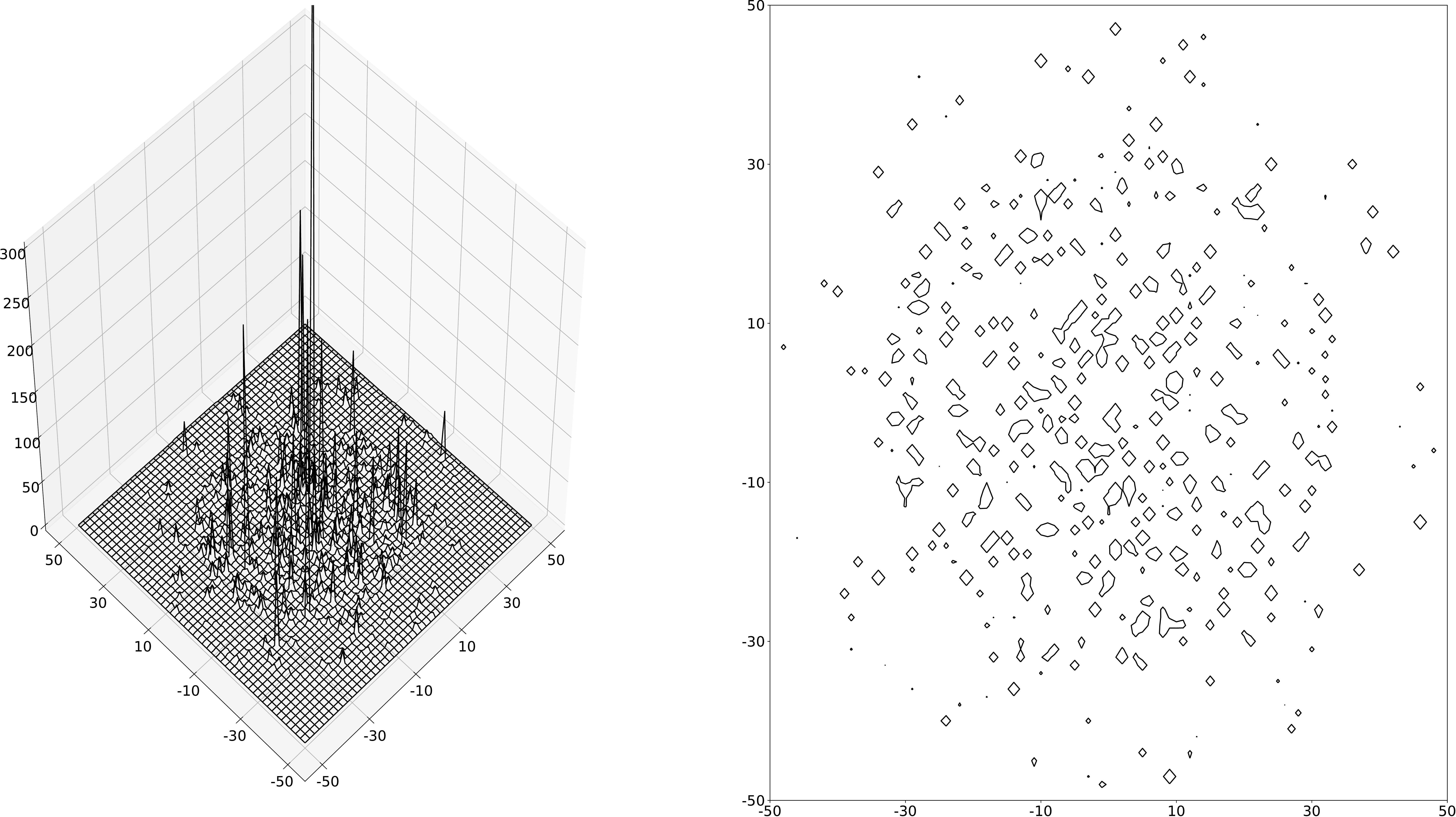}
 \label{fig:1}
 \caption{The simulated random field over $C=\{(x,y) \in \mathbb{Z}^2:  x^2 + y^2 \le 50^2\}$.} 
\end{figure}

\section{Conclusion}
In the trajectories of a random field on a lattice, the propensity for oscillations, meaning the proportion of exceedances of high levels which are upcrossings,  is inversely related to the degree of dependence and concordance between the random variables that generate it. We intended to quantify this propensity through coefficients that are easy to estimate and use in applications.
We defined a crossinggram for max-stable random fields on lattices, which take values in [0,1] and are larger the more dependent and concordant  the variables in the field are.
These coefficients are related to the extremal coefficients usually found in the literature of extreme values. They also have a representation from the expected values of local maxima of the random field. This representation motivates the estimation method  proposed and applied.
The coefficients range from 0 to 1, where $0$ represents a very local rough random field and  $1$  maximum local smoothness.
The proposed estimator has Normal asymptotic distribution and can be used in practical applications.
The proposed coefficients give good insight into the propensity for upcrossings by a max-stable random field from the theoretical point of view and in the example considered. They are easy to estimate and can be widely used.

% -----------------------------------------------------------------
\end{document}